\newtheorem{thm}{Theorem}[section]
\newtheorem{defi}{Definition}[section]
\newtheorem{lem}[thm]{Lemma}
\newtheorem{coro}[thm]{Corollary}
\theoremstyle{plain}
\title{The Alon-Tarsi Number of Cartesian product and Corona product of Hypercube Graph and Special Graphs}
\author { Zhiguo Li{$^*$}, Yujia Gai, Zeling Shao\\
{\small School of Science, Hebei University of Technology, Tianjin 300401, China}
\date{}
}
\begin{document}
\baselineskip 0.65cm

\maketitle

\begin{abstract}
The \emph{Alon-Tarsi number} of a graph $G$ is the smallest $k$ so that there exists an orientation $D$ of $G$ with max outdegree $k-1$ satisfying the number of even Eulerian subgraphs different from the number of odd Eulerian subgraphs. In this paper, the Alon-Tarsi number of the $n$-cube is obtained according to its special properties, we obtain the Alon-Tarsi number of Cartesian product of some special bipartite graphs, and get the Alon-Tarsi number of Corona product of graphs. As corollaries, we get the Alon-Tarsi number of Cartesian product and Corona product of hypercube graph and special graphs.

\bigskip
\noindent\textbf{Keywords:} Alon-Tarsi number; Alon-Tarsi orientation; chromatic number; hypercube; Cartesian product of graphs; Corona product of graphs  \\

\noindent\textbf{2000 MR Subject Classification.} 05C15
\end{abstract}

\section{Introduction}

We only consider simple and finite graphs in this paper. The $chromatic$ $number$ of a graph $G$, denoted by $\chi (G)$, is the least positive integer $k$ such that $G$ has a proper vertex coloring using $k$ colors.  List coloring is a well-known variation on vertex coloring. For list coloring, a $k$-$list$ $assignment$ of a graph $G$ is a mapping $L$ which assigns to each vertex $v$ of $G$ a set $L(v)$ of $k$ permissible colors. An $L$-$coloring$ of $G$ is a coloring $f$ of $G$ such that $f(v)\in L(v)$ for each verex $v$. We say $G$ is $L$-$colorable$ if there exists a proper $L$-coloring of $G$. A graph $G$ is $k$-$choosable$ if $G$ is $L$-colorable for every $k$-list assignment $L$. The \emph{choice number} of a graph $G$ is the least positive integer $k$ such that $G$ is $k$-choosable, denoted by $ch(G)$.

A subdigraph $H$ of a directed graph $D$ is called Eulerian if $V(H)=V(D)$ and the indegree $d^{-}_{H}(v)$  of every vertex $v$ in $H$ is equal to its outdegree $d^{+}_{H}(v)$. We do not assume that $H$ is connected.
$H$ is even if the number of arcs of $H$ is even, otherwise, it is odd. Let $\mathcal{E}_{e}(D)$ and $\mathcal{E}_{o}(D)$  denote the
 families of even and odd Eulerian subgraphs of $D$, respectively. Let {\rm diff}$(D)=|\mathcal{E}_{e}(D)|-|\mathcal{E}_{o}(D)|$. We say that $D$ is $Alon$-$Tarsi$ if {\rm diff}$(D)\neq0$. If an $orientation$ $D$ of $G$ yields an $Alon$-$Tarsi$ $digraph$, then we say $D$ is an $Alon$-$Tarsi$ $orientation$ $($or an $AT$-$orientation$, for short$)$ of $G$.

The \emph{Alon-Tarsi number} of $G$ ($AT(G)$, for short) is the smallest $k$ such that there exists an orientation $D$ of $G$ with max outdegree $k-1$ satisfying the number of Eulerian subgraphs with even arcs different from the number of Eulerian subgraphs with odd arcs. It was proposed by Alon and Tarsi$^{[1]}$, subsequently, they used algebraic methods to prove that $\chi(G)\leq ch(G)\leq AT(G)$. The graph $G$ is called $chromatic$-$choosable$ if $\chi(G)= ch(G)$. The graph $G$ is called $chromatic$-$AT~choosable$ if $\chi(G)= AT(G)$.

The graph operation, especially graph product, plays significant role not only in pure and applied mathematics, but also in computer science. For graph product, we are familiar with Cartesian product and Corona product. 

The $Cartesian~product$ of graphs $G$ and $H$, denoted by $G\square H$, is the graph with vertex set $V(G)\times V(H)$  and edges created such that $(u,v)$ is adjacent to $(u^{\prime},v^{\prime})$ if and only if either $u=u^{\prime}$ and $vv^{\prime}\in E(H)$ or $v=v^{\prime}$ and $uu^{\prime}\in E(G)$.
Note that $G\square H$ contains $|V(G)|$ copies of $H$ and $|V(H)|$ copies of $G$.  It is also easy to show that $\chi(G\square H)=\max\{\chi(G),\chi(H)\}$.$^{[2]}$ There are some results concerning the Alon-Tarsi number of Cartesian product of graphs. 

Kaul and Mudrock proved that the Cartesian product of cycle and path has $AT(C_{k}\square P_{n})=3$ in $[2]$ ($k,n$ are integer). Suppose that $G$ is a complete graph or an odd cycle with $|V(G)|\geq3$ and suppose $H$ is a graph on at least two vertices that contains a Hamilton path, $w_1,w_2,\cdots,w_m$, such that $w_i$ has at most $k$ neighbors among $w_1,\cdots,w_{i-1}$. Then, $AT(G\square H)\leq\Delta(G)+k$$^{[2]}$. For any graph $G$, all vertices of maximum degree in which may be covered by some vertex-disjoint cycles, Petrov and Gordeev$^{[10]}$ obtained that $AT(G\square C_{2k})\leq\Delta(G\square C_{2k})-1$. Li, Shao, Petrov and Gordeev in $[3]$ proved that the Cartesian product of a cycle and a cycle has
\[ AT(C_{m}\square C_n)=\begin{cases}
		4 ,  & n~and~m~are~both~odd~numbers;\\
		3,  &  otherwise.
	\end{cases} \]

In this paper, we get the exact value of the Alon-Tarsi number of $n$-cube $Q_n$, and the Alon-Tarsi number of Cartesian product and Corona product between $Q_n$ and special graphs. The main results are the following theorems:\\
{\bf Theorem 1.}
For any $n,m\in \mathbb{N}$, let $T_m$ be a tree with $m$ vertices ($m\geq2$). Then
 \[ AT(Q_{n}\square T_m)=\begin{cases}
		\lceil\frac{n}{2}\rceil+1 ,  & n~is~odd~and~m=2;\\
		\lceil\frac{n}{2}\rceil+2,  &  otherwise.
	\end{cases} \]
{\bf Theorem 2.}
For any $n,m\in \mathbb{N}$, let $G_2$ be a simple graph with $m$ vertices ($m\geq2$). If $AT(G_2)=2$, then
\[ AT(Q_{n}\circ G_2)=\begin{cases}
	3 ,  & n\leq2;\\
	\lceil\frac{n}{2}\rceil+1,  &  n\textgreater2.
\end{cases} \]

 \section{Preliminaries}

\begin{defi}$^{[4]}$
The $n$-cube graph can be defined by Cartesian product denoted by $Q_n$, which is recursive definition as follows: $Q_1=K_2$, $Q_n=K_2\square Q_{n-1}$.
\end{defi}

Let $n\geq1$ be an integer, $B=\{0,1\}$ and $\mathcal{B}_n=\{b_1b_2\cdots b_n|b_i\in B,i\in [n]\},$ where $[n]=\{1,\cdots,n\}$. An element $\beta$ of $\mathcal{B}_n$ is called a binary string of length $n$. The weight of $\beta$ is defined as $w(\beta)=\sum_{i=1}^nb_i$, in other words, $w(\beta)$ is the number of 1s in string $\beta$$^{[5]}$. The two vertices of $Q_1$ are labeled 0 and 1, the second formula in the Definition 2.1 shows that $Q_n$ can be obtained by connecting the corresponding vertices of two copies of $Q_{n-1}$, therefore $Q_2$ consists of two copies of $Q_1$, the vertices of the first copy can be labeled 00 and 10, while the vertices of the second copy are labeled 01 and 11. More generally, the vertex set of $Q_n$ is composed of binary strings of length $n$. The $n$-cube $Q_n$ is the graph defined on the vertex set $\mathcal{B}_n$, and two vertices $b_1b_2\cdots b_n$ and $b_1^{'}b_2^{'}\cdots b_n^{'}$ being adjacent if $b_i\neq b_i^{'}$ holds for exactly one $i\in [n]$. (See Figure 1 for the $n$-cube graph, $n=1,2,3$)

\begin{figure}[htbp]
\centering
\includegraphics[height=6cm, width=0.5\textwidth]{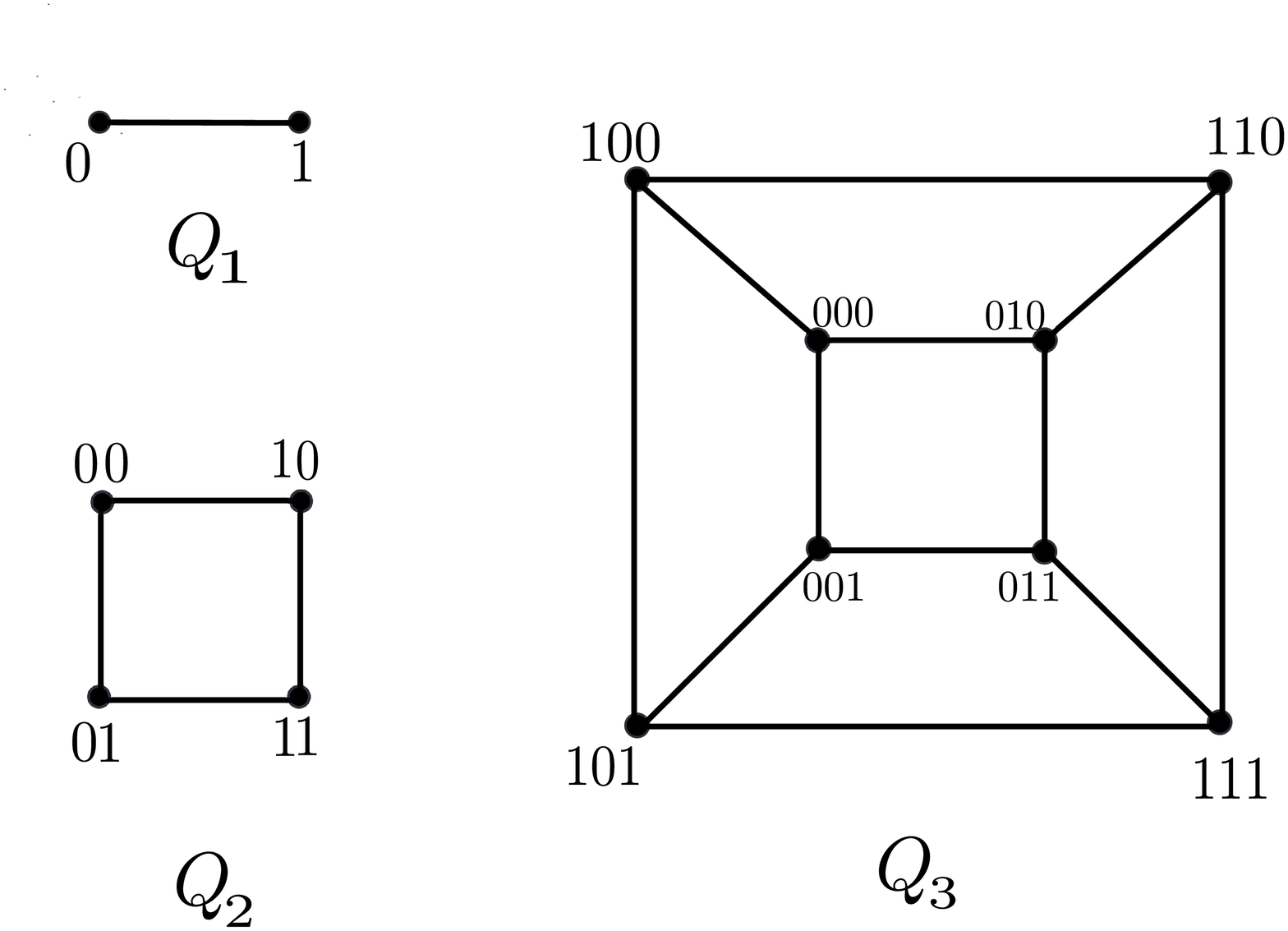}
\centerline{Fig.1 The $n$-cube graph for $n=1,2,3$. }
\end{figure}

It is easy to see $|V(Q_n)|=|\mathcal{B}_n|=2^n$, and each vertex connects $n$ edges, hence the $n$-cube graph is an $n$-regular graph. By Handshake Lemma,
$\sum_{v\in Q_n}d(v)=n2^n=2|E(Q_n)|$, so $|E(Q_n)|=n2^{n-1}.$

\begin{defi}$^{[6]}$
Let $G$ and $H$ be simple graphs. The corona $G\circ H$ of $G$ and $H$ is constructed as follows: Choose a labeling of the vertices of $G$ with labels $1,\cdots,m$. Take one copy of $G$ and $m$ disjoint copies of $H$, labeled $H_1,\cdots,H_m$, and connect each vertex of $H_i$ to vertex $i$ of $G$.
\end{defi}

\begin{lem}$^{[7]}$
If $G$ is a bipartite graph, then every orientation $D$ of $G$ is an $AT$-orientation. And
 $$AT(G)=\max\limits_{\text{subgraph}~H~\text{of}~ G}\lceil\frac{|E(H)|}{|V(H)|}\rceil+1.$$
\end{lem}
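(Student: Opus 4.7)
My plan is to prove the two assertions of the lemma separately. For the first assertion, I would observe that any Eulerian subdigraph $H$ of an orientation $D$, viewed as an undirected multigraph, has every vertex of even degree since $d^+_H(v)=d^-_H(v)$. Hence $H$ decomposes into edge-disjoint undirected cycles. In a bipartite graph every cycle has even length, so $|E(H)|$ is a sum of even integers and is itself even. Consequently $\mathcal{E}_o(D)=\emptyset$ while $\mathcal{E}_e(D)$ contains at least the empty subgraph, giving $\mathrm{diff}(D)\geq 1$ for every orientation $D$.

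Because every orientation is AT, one has $AT(G)=1+\mu(G)$, where $\mu(G):=\min_D\Delta^+(D)$ is taken over all orientations of $G$. I would then prove $\mu(G)=\max_H\lceil|E(H)|/|V(H)|\rceil$ by two matching inequalities. For the lower bound, fix any orientation $D$ and any subgraph $H\subseteq G$. Each edge of $H$ contributes $1$ to $d^+_D(v)$ for one endpoint in $V(H)$, so
\[
|E(H)|\leq \sum_{v\in V(H)}d^+_D(v)\leq |V(H)|\cdot \Delta^+(D),
\]
and dividing and rounding up yields $\Delta^+(D)\geq \lceil|E(H)|/|V(H)|\rceil$. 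Taking the max over $H$ and the min over $D$ gives one direction.

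For the upper bound I would invoke Hakimi's orientation theorem (1965), which states that $G$ admits an orientation with $\Delta^+\leq k$ if and only if $|E(H)|\leq k|V(H)|$ for every subgraph $H$. Setting $k=\max_H\lceil|E(H)|/|V(H)|\rceil$, the density condition is satisfied on every subgraph, so such an orientation exists and $\mu(G)\leq k$. The main obstacle is this upper bound. A self-contained proof would go through the max-flow/min-cut theorem on an auxiliary network whose source sends one unit of flow into each edge of $G$, each edge relays its unit to one of its two endpoints, and each vertex forwards to the sink with capacity $k$. A feasible integer flow of value $|E(G)|$ corresponds precisely to an orientation with $\Delta^+\leq k$, and the density condition guarantees that no cut of smaller capacity exists, so max-flow equals $|E(G)|$ and the desired orientation is produced.
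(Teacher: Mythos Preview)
The paper does not supply its own proof of this lemma; it is quoted from Alon and Tarsi's original paper (the citation marked $^{[7]}$) and thereafter used as a black box. So there is nothing in the paper to compare your argument against.

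Your proposal is correct and is the standard route. The bipartite hypothesis forces every Eulerian subdigraph to decompose into even cycles, so $\mathcal{E}_o(D)=\emptyset$ while the empty subgraph lies in $\mathcal{E}_e(D)$, giving $\mathrm{diff}(D)\geq 1$ for every orientation. Once every orientation is Alon--Tarsi, $AT(G)=1+\min_D\Delta^+(D)$, and your two-sided bound for $\min_D\Delta^+(D)$ is exactly right: the pigeonhole inequality $|E(H)|\leq |V(H)|\,\Delta^+(D)$ gives the lower bound, and Hakimi's theorem (or the equivalent max-flow/min-cut construction you outline) supplies an orientation matching it. This is precisely the argument one finds in the cited source, so your write-up would serve as a self-contained substitute for the reference.
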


For a graph $G$, the \emph{maximum average degree} mad($G$) is defined as the maximum average degree over all subgraphs of $G$, and can be computed by$^{[8]}$
$$\text{mad}(G)=\max\limits_{\text{subgraph}~H~\text{of}~G}\frac{\sum_{v\in V(H)}deg_H(v)}{|V(H)|}=\max\limits_{\text{subgraph}~H~\text{of}~ G}\frac{2|E(H)|}{|V(H)|}.$$

\begin{lem}$^{[9]}$
Assume that $D$ is a digraph and $V(D)=X_1\dot\cup X_2$. For $i=1,2,$ let $D_i=D[X_i]$ be the subdigraph of $D$ induced by $X_i$. If all the arcs between $X_1$ and $X_2$ are from $X_1$ to $X_2$, then $D$ is Alon-Tarsi if and only if $D_1,D_2$ are both Alon-Tarsi.
\end{lem}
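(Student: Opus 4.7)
The plan is to reduce the Alon–Tarsi property of $D$ to those of $D_1$ and $D_2$ through the multiplicative identity $\mathrm{diff}(D)=\mathrm{diff}(D_1)\cdot\mathrm{diff}(D_2)$; the stated equivalence then follows immediately, since a product of integers is nonzero if and only if both factors are.

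The crux, which is also the only real obstacle in the proof, is the structural claim that no Eulerian subgraph $H$ of $D$ contains any of the cross-arcs from $X_1$ to $X_2$. I would establish this by a degree sum over $X_2$: since no arc of $D$ runs from $X_2$ to $X_1$,
\[\sum_{v\in X_2} d^+_H(v) \;=\; \bigl|\{\text{arcs of } H \text{ inside } X_2\}\bigr|,\]
while the corresponding in-degree sum equals the same quantity plus the number of $X_1\to X_2$ arcs used by $H$. The Eulerian condition $d^-_H(v)=d^+_H(v)$, summed over $X_2$, then forces the number of cross-arcs in $H$ to be zero. Conceptually, a one-directional cut cannot support any circulation across it, so every Eulerian subgraph must split along the cut.

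With this in hand, the restriction map $H\mapsto (H\cap D_1,\,H\cap D_2)$ provides a bijection $\mathcal{E}(D)\leftrightarrow \mathcal{E}(D_1)\times\mathcal{E}(D_2)$ under which the arc count is additive, namely $|E(H)|=|E(H_1)|+|E(H_2)|$. Partitioning the product according to the parity of $|E(H_1)|$ and $|E(H_2)|$ yields
\begin{align*}
|\mathcal{E}_e(D)| &= |\mathcal{E}_e(D_1)|\cdot|\mathcal{E}_e(D_2)| + |\mathcal{E}_o(D_1)|\cdot|\mathcal{E}_o(D_2)|,\\
|\mathcal{E}_o(D)| &= |\mathcal{E}_e(D_1)|\cdot|\mathcal{E}_o(D_2)| + |\mathcal{E}_o(D_1)|\cdot|\mathcal{E}_e(D_2)|,
\end{align*}
whose difference factors cleanly as $\bigl(|\mathcal{E}_e(D_1)|-|\mathcal{E}_o(D_1)|\bigr)\bigl(|\mathcal{E}_e(D_2)|-|\mathcal{E}_o(D_2)|\bigr)=\mathrm{diff}(D_1)\cdot\mathrm{diff}(D_2)$. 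Everything after the degree-sum observation is routine bookkeeping, so the entire argument hinges on that single structural fact about one-sided cuts.
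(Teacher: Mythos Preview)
Your argument is correct; the degree-sum observation over $X_2$ cleanly forces every Eulerian subgraph to avoid the cross-arcs, and the bijection and parity bookkeeping that follow are accurate. Note, however, that the paper does not supply its own proof of this lemma: it is quoted from reference~[9] (Zhu and Balakrishnan) as a preliminary fact, so there is no in-paper argument to compare against. What you have written is precisely the standard proof one finds for this result, and it would serve well as a self-contained justification were one needed.
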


\section{The proof of the Main Results}
Firstly, we prove Theorem 1. It will be completed by the following lemmas.
\begin{lem}
If $G$ is a $d$-regular bipartite graph, then $AT(G)=\lceil\frac{d}{2}\rceil+1.$
\end{lem}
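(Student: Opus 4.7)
The plan is to apply the bipartite formula in the already-cited lemma:
\[
AT(G)=\max_{H\subseteq G}\left\lceil\frac{|E(H)|}{|V(H)|}\right\rceil+1,
\]
and show that the maximum on the right equals $\lceil d/2\rceil$ when $G$ is $d$-regular bipartite.

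First I would establish the lower bound by taking $H=G$ itself as a subgraph. Since $G$ is $d$-regular, the Handshake Lemma gives $2|E(G)|=d\,|V(G)|$, so $|E(G)|/|V(G)|=d/2$, and therefore $\lceil|E(G)|/|V(G)|\rceil=\lceil d/2\rceil$. This immediately forces $AT(G)\geq\lceil d/2\rceil+1$.

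Next I would establish the matching upper bound. For every subgraph $H$ of $G$, the degree of each vertex in $H$ cannot exceed its degree in $G$, so
\[
2|E(H)|=\sum_{v\in V(H)}\deg_H(v)\le\sum_{v\in V(H)}\deg_G(v)=d\,|V(H)|.
\]
Thus $|E(H)|/|V(H)|\le d/2$, whence $\lceil|E(H)|/|V(H)|\rceil\le\lceil d/2\rceil$ for every subgraph $H$. Combined with the bipartite formula, this gives $AT(G)\le\lceil d/2\rceil+1$.

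The two inequalities yield the claimed equality. There is essentially no obstacle here: the proof is a one-line application of the bipartite characterization together with the observation that a $d$-regular graph maximizes the edge-to-vertex ratio on its whole vertex set. The only thing to be careful about is that the cited lemma is stated for bipartite graphs, so bipartiteness of $G$ is used exactly once (to invoke the formula), while $d$-regularity is used to evaluate the max.
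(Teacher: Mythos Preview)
Your proof is correct and follows essentially the same approach as the paper: both invoke the bipartite formula from Lemma~2.1 and then use $d$-regularity to show that the maximum of $|E(H)|/|V(H)|$ over all subgraphs $H$ equals $d/2$. The paper phrases this as ``$G$ has maximum average degree $d$'' without further justification, whereas you spell out the two inequalities (taking $H=G$ for the lower bound, and bounding $\deg_H(v)\le d$ for the upper bound) explicitly; the underlying argument is identical.
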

\begin{proof}
Since $G$ is a bipartite graph, by Lemma 2.1,
$$AT(G)=\max\limits_{\text{subgraph}~H~\text{of}~ G}\lceil\frac{|E(H)|}{|V(H)|}\rceil+1.$$
According to $G$ is also a $d$-regular graph, and $G$ has maximum average degree $d$, so
$$\max\limits_{\text{subgraph}~H~\text{of}~ G}\frac{|E(H)|}{|V(H)|}=\frac{d}{2}.$$ Then $AT(G)=\lceil\frac{d}{2}\rceil+1.$
\end{proof}

\begin{lem}
For an n-cube graph $Q_n$, $AT(Q_n)=\lceil\frac{n}{2}\rceil+1.$
\end{lem}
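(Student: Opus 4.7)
The plan is to recognize that this lemma is an immediate corollary of the previous lemma (the one asserting $AT(G) = \lceil d/2 \rceil + 1$ for $d$-regular bipartite graphs). So the entire proof reduces to verifying two structural properties of $Q_n$: that it is $n$-regular, and that it is bipartite.

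First I would invoke the $n$-regularity, which was already noted in the preliminaries: every binary string $b_1 b_2 \cdots b_n \in \mathcal{B}_n$ has exactly $n$ neighbors, obtained by flipping each coordinate. Next I would establish bipartiteness. The natural bipartition is by the parity of the weight: partition $V(Q_n) = \mathcal{B}_n$ into $X_0 = \{\beta : w(\beta) \text{ even}\}$ and $X_1 = \{\beta : w(\beta) \text{ odd}\}$. Any edge of $Q_n$ flips exactly one coordinate, changing the weight by $\pm 1$, so every edge crosses between $X_0$ and $X_1$. Hence $Q_n$ is bipartite.

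With both facts in hand, applying the previous lemma with $d = n$ yields $AT(Q_n) = \lceil n/2 \rceil + 1$ directly. The only thing worth double-checking is that the maximum-average-degree computation used in the previous lemma really does attain $d$ on the full graph $G = Q_n$ rather than on some smaller subgraph — but since $Q_n$ itself is $d$-regular, it is its own densest subgraph in terms of edges-per-vertex ratio, and this is exactly the content of the previous lemma's proof. There is no genuine obstacle here; the lemma is essentially a one-line specialization, and I would present it as such.
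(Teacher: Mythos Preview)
Your proposal is correct and follows essentially the same approach as the paper: verify that $Q_n$ is $n$-regular and bipartite (the paper also argues bipartiteness via the parity of coordinate differences, which amounts to your weight-parity partition), then invoke the previous lemma with $d = n$.
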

\begin{proof}
Note that $Q_n$ is bipartite with partition $(X,Y)$, since two vertices are adjacent if and only if the two vertices differ in one and only one coordinate. Assume $u,v\in V(Q_n)$, if $u$ and $v$ have more than two different coordinates, then both $u$ and $v$ belong to $X$ or $Y$, and if $u$ and $v$ differ in only one coordinate, then $u$ and $v$ belong to different divisions. Since $Q_n$ is an $n$-regular graph, by Lemma 3.1,
$AT(Q_n)=\lceil\frac{n}{2}\rceil+1.$  
\end{proof}
\noindent\textbf{Remark.}
The $n$-cube graph $Q_n$ is not $chromatic$-$AT~choosable$. 

\begin{lem}
Let $G_1$ and $G_2$ be simple bipartite graphs, and $G=G_1\square G_2$ is the Cartesian product of $G_1$ and $G_2$. For the graphs $G_1$ and $G_2$, satisfy $AT(G_1)=\lceil\frac{|E(G_1)|}{|V(G_1)|}\rceil+1,AT(G_2)=\lceil\frac{|E(G_2)|}{|V(G_2)|}\rceil+1$, and $\lceil\frac{|E(G_1)|}{|V(G_1)|}+\frac{|E(G_2)|}{|V(G_2)|}\rceil=\lceil\frac{|E(G_1)|}{|V(G_1)|}\rceil+\lceil\frac{|E(G_2)|}{|V(G_2)|}\rceil,$
then $$AT(G)=AT(G_1)+AT(G_2)-1.$$
\end{lem}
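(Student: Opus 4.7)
The plan is to exploit bipartiteness and the density characterization of the Alon--Tarsi number given by Lemma~2.1. Since $G_1$ and $G_2$ are bipartite, so is $G = G_1\square G_2$ (the standard product bipartition works), and Lemma~2.1 gives
\[
AT(G)=\max_{H\subseteq G}\Big\lceil\frac{|E(H)|}{|V(H)|}\Big\rceil+1.
\]
Writing $a_i=|E(G_i)|/|V(G_i)|$ and $m_i=\max_{H'\subseteq G_i}|E(H')|/|V(H')|$, the three hypotheses translate to $AT(G_i)=\lceil a_i\rceil+1=\lceil m_i\rceil+1$ together with the ceiling-additivity $\lceil a_1+a_2\rceil=\lceil a_1\rceil+\lceil a_2\rceil$; in particular $m_i\leq\lceil a_i\rceil$. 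The goal is to pin down the maximum above exactly at $\lceil a_1\rceil+\lceil a_2\rceil$.

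For the lower bound I would simply plug in $H=G$ and use the standard identities $|V(G)|=|V(G_1)|\,|V(G_2)|$ and $|E(G)|=|V(G_1)|\,|E(G_2)|+|V(G_2)|\,|E(G_1)|$, which yield $|E(G)|/|V(G)|=a_1+a_2$. The ceiling-additivity hypothesis then gives $AT(G)\geq\lceil a_1+a_2\rceil+1=\lceil a_1\rceil+\lceil a_2\rceil+1=AT(G_1)+AT(G_2)-1$.

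For the upper bound, let $H$ be any subgraph of $G$; since deleting edges only decreases the density, I may replace $H$ by the induced subgraph on $S=V(H)\subseteq V(G_1)\times V(G_2)$. I would then partition $E(H)$ into \emph{horizontal} edges (those lying in some copy of $G_2$) and \emph{vertical} edges (those lying in some copy of $G_1$). For each $u\in V(G_1)$, set $S_u=\{v:(u,v)\in S\}$; the horizontal edges in column $u$ are precisely $E(G_2[S_u])$, and by definition of $m_2$, $|E(G_2[S_u])|\leq m_2|S_u|$. Summing over $u$ bounds the total horizontal edges by $m_2|S|$; symmetrically the vertical edges are bounded by $m_1|S|$. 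Thus $|E(H)|/|S|\leq m_1+m_2\leq\lceil a_1\rceil+\lceil a_2\rceil$, so $\lceil|E(H)|/|V(H)|\rceil\leq\lceil a_1\rceil+\lceil a_2\rceil$, giving $AT(G)\leq AT(G_1)+AT(G_2)-1$.

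I expect the main obstacle to be the edge-partition step in the upper bound: controlling the density of an arbitrary (possibly non-spanning, highly irregular) induced subgraph of the product by the \emph{sum} of the maximum subgraph densities of the factors. The third hypothesis then enters precisely to reconcile the two natural bounds, $\lceil a_1+a_2\rceil$ produced from below against $\lceil a_1\rceil+\lceil a_2\rceil$ produced from above, so that the sandwich closes up to give equality.
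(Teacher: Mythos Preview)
Your argument is correct, and the lower bound is exactly the paper's: both plug in $H=G$ and then invoke the ceiling-additivity hypothesis. The genuine difference is in the upper bound. The paper does not bound subgraph densities at all; instead it builds an explicit orientation of $G$ by orienting each copy of $G_1$ according to a fixed AT-orientation $D_1$ (max outdegree $k_1-1$) and orienting the inter-copy edges according to a fixed AT-orientation $D_2$ of $G_2$ (contributing at most $k_2-1$ more to each outdegree), then notes that since $G$ is bipartite this orientation is automatically Alon--Tarsi, giving $AT(G)\le k_1+k_2-1$. Your route stays entirely inside the density framework of Lemma~2.1 and, via the row/column edge-partition, actually establishes the clean structural inequality $\max_{H\subseteq G_1\square G_2}|E(H)|/|V(H)|\le m_1+m_2$; this is a nice standalone fact and makes the whole proof uniform. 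The paper's approach has the advantage of exhibiting the witnessing orientation explicitly and of generalizing (the same construction gives $AT(G_1\square G_2)\le AT(G_1)+AT(G_2)-1$ whenever the combined orientation can be certified Alon--Tarsi, which is where the paper's appeal to Lemma~2.2 is meant to do work beyond the bipartite case).
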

\begin{proof}
 Assume that $G_1$ has partition $(X_1,Y_1)$, and $G_2$ has partition $(X_2,Y_2)$, let $X_2=\{v_1,\cdots,v_m\}$, $Y_2=\{v_{m+1},\cdots,v_{m+n}\}$. Let $G_1$ and $G_2$ be simple bipartite graphs with $|V(G_1)|=p,|E(G_1)|=q_1$, and $|E(G_2)|=q_2$. Note that $G$ contains $m+n$ copies of $G_1$, let $G_1^i$ be the copy of $G_1$ correspond to the vertex $v_i$ in $G_2$ $(i\in[m])$, which have partition $(X_1^i,Y_1^i)$, and let $G_1^j$ be the copy of $G_1$ correspond to the vertex $v_j$ in $G_2$ $(j\in\{m+1,\cdots,m+n\})$, which have partition $(X_1^j,Y_1^j)$. If $v_i$ is adjacent to $v_j$, connect the corresponding points of $G_1^i$ and $G_1^j$, and denote $L_{i,j}$ as the set of edges connecting $G_1^i$ and $G_1^j$. Note that $G$ is bipartite with partition $(X,Y)$ with $X=(\cup_{1\leq i\leq m}X_1^i)\cup(\cup_{m+1\leq j\leq m+n}Y_1^j)$ and $Y=(\cup_{1\leq i\leq m}Y_1^i)\cup(\cup_{m+1\leq j\leq m+n}X_1^j)$.

Let $AT(G_1)=\lceil\frac{q_1}{p}\rceil+1=k_1,AT(G_2)=\lceil\frac{q_2}{m+n}\rceil+1=k_2$, by the definition of the Alon-Tarsi number, the graph $G_1$ has an $AT$-orientation $D_1$, such that every vertex $x\in V(D_1)$ with outdegree at most $k_1-1$. Similarly, the graph $G_2$ has an $AT$-orientation $D_2$, such that every vertex $x\in V(D_2)$ with outdegree at most $k_2-1$. Then we give an orientation $D$ for $G$ by the following rules:

$R_1:$ For the copy $G_1^i$ and $G_1^j$, the edges belonging to them are oriented as $D_1$.

$R_2:$ The edges in $L_{i,j}$ are oriented as $D_2$, that is to say, oriented from $G_1^i$ to $G_1^j$ if $v_iv_j$ is oriented from $v_i$ to $v_j$.

By Lemma 2.2, the orientation $D$ of $G$ is an $AT$-orientation, and it has outdegree at most $k_1+k_2-2$, hence $AT(G)\leq AT(G_1)+AT(G_2)-1$.
It is easy to know that $|V(G)|=p(m+n)$ and $|E(G)|=q_1(m+n)+pq_2$. Since $\sum_{x\in V(D)}d^+_D(x)=|A(D)|$, by the Pigeonhole Principle, there exists some vertices have outdegree at least $\lceil\frac{q_1}{p}+\frac{q_2}{m+n}\rceil$ for any orientation $D$ of $G$,  $AT(G)\geq\lceil\frac{q_1}{p}+\frac{q_2}{m+n}\rceil+1$, obviously, $AT(G)\geq\lceil\frac{q_1}{p}\rceil+\lceil\frac{q_2}{m+n}\rceil+1=AT(G_1)+AT(G_2)-1$. Hence $AT(G)=AT(G_1)+AT(G_2)-1$. (See Figure 2 for an orientation of $C_4\square T_4$)
\end{proof}
\begin{figure}[htbp]
\centering
\includegraphics[height=11cm, width=0.8\textwidth]{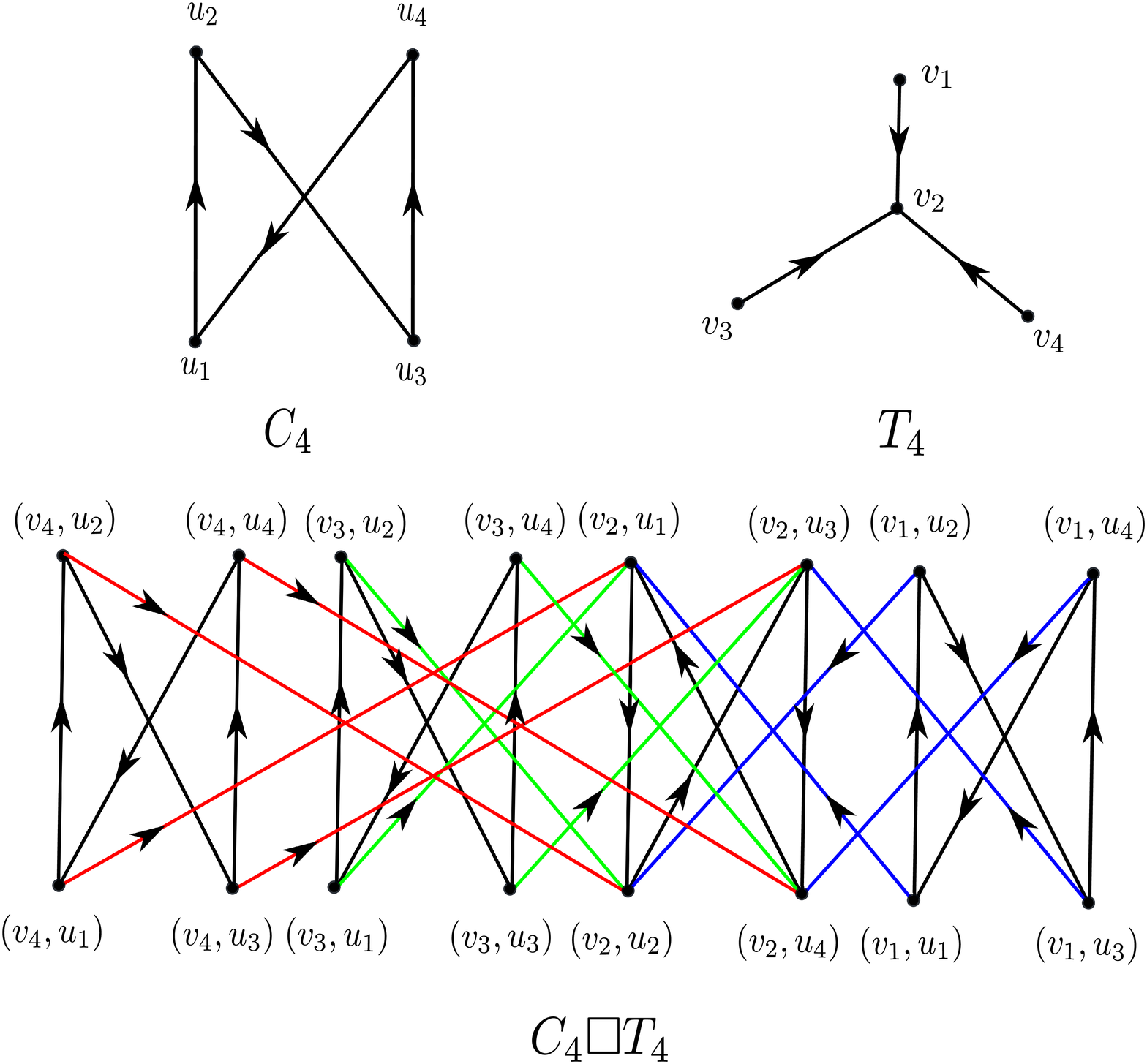}
\centerline{Fig.2 An orientation of $C_4\square T_4$. }
\end{figure}
\noindent\textbf{Proof of Theroem 1.}
Since $T_m$ is a tree, $AT(T_m)=2=\lceil\frac{|E(T_m)|}{|V(T_m)|}\rceil+1=\lceil\frac{m-1}{m}\rceil+1$. It is easy to see that $\frac{|E(Q_n)|}{|V(Q_n)|}=\frac{n}{2}$, by Lemma 3.2, $AT(Q_{n})=\lceil\frac{n}{2}\rceil+1=\lceil\frac{|E(Q_n)|}{|V(Q_n)|}\rceil+1$. 

When $n$ is even, it is obvious that $\lceil\frac{n}{2}\rceil=\frac{n}{2}$, and $\lceil\frac{n}{2}+\frac{m-1}{m}\rceil=\frac{n}{2}+1=\lceil\frac{n}{2}\rceil+\lceil\frac{m-1}{m}\rceil$.
By Lemma 3.3, $AT(Q_{n}\square T_m)=\lceil\frac{n}{2}\rceil+2$.

When $n$ is odd and $m=2$, obviously, $\lceil\frac{n}{2}+\frac{m-1}{m}\rceil=\lceil\frac{n+1}{2}\rceil=\lceil\frac{n}{2}\rceil\neq\lceil\frac{n}{2}\rceil+\lceil\frac{m-1}{m}\rceil$. Since $Q_{n}\square T_2=Q_{n}\square K_2=Q_{n+1}$,  $AT(Q_{n}\square T_2)=AT(Q_{n+1})=\lceil\frac{n+1}{2}\rceil+1=\lceil\frac{n}{2}\rceil+1$.

When $n$ is odd and $m\textgreater2$, we have $\lceil\frac{n}{2}+\frac{m-1}{m}\rceil=\frac{n+1}{2}+1=\lceil\frac{n}{2}\rceil+\lceil\frac{m-1}{m}\rceil$. By Lemma 3.3, $AT(Q_{n}\square T_m)=\lceil\frac{n}{2}\rceil+2$. (See Figure 2 for an orientation of $Q_2\square T_4$)

The result is established.

\begin{coro}
For any $n,k\in \mathbb{N}$, let $C_{2k}$ be an even cycle. Then $AT(Q_{n}\square C_{2k})=\lceil\frac{n}{2}\rceil+2.$
\end{coro}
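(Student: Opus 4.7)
The plan is to obtain this corollary as an immediate application of Lemma 3.3 with $G_1 = Q_n$ and $G_2 = C_{2k}$. Both factors are simple bipartite graphs ($C_{2k}$ is bipartite precisely because its length is even, and $Q_n$ was already observed to be bipartite in the proof of Lemma 3.2), so the hypothesis on bipartiteness is free.

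First I would compute the Alon--Tarsi numbers of the two factors in the form required by Lemma 3.3. Since $C_{2k}$ is $2$-regular and bipartite, Lemma 3.1 gives $AT(C_{2k}) = \lceil 2/2 \rceil + 1 = 2$, and this equals $\lceil |E(C_{2k})|/|V(C_{2k})| \rceil + 1 = \lceil 2k/(2k) \rceil + 1$. From Lemma 3.2 we have $AT(Q_n) = \lceil n/2 \rceil + 1$, and since $|E(Q_n)|/|V(Q_n)| = n/2$ this also matches the required form $\lceil |E(Q_n)|/|V(Q_n)| \rceil + 1$.

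Next I would verify the ceiling additivity condition of Lemma 3.3. Here
\[
\left\lceil \tfrac{|E(Q_n)|}{|V(Q_n)|} + \tfrac{|E(C_{2k})|}{|V(C_{2k})|}\right\rceil = \left\lceil \tfrac{n}{2} + 1\right\rceil = \left\lceil \tfrac{n}{2}\right\rceil + 1 = \left\lceil \tfrac{|E(Q_n)|}{|V(Q_n)|}\right\rceil + \left\lceil \tfrac{|E(C_{2k})|}{|V(C_{2k})|}\right\rceil,
\]
where the middle equality uses the fact that $1$ is an integer, so the ceiling distributes. Hence the hypotheses of Lemma 3.3 are fully satisfied.

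Finally, applying Lemma 3.3 directly yields
\[
AT(Q_n \square C_{2k}) = AT(Q_n) + AT(C_{2k}) - 1 = \left(\left\lceil \tfrac{n}{2}\right\rceil + 1\right) + 2 - 1 = \left\lceil \tfrac{n}{2}\right\rceil + 2,
\]
which is the claim. There is no real obstacle here: the whole content of the corollary is the observation that $C_{2k}$ is a $2$-regular bipartite graph whose edge-to-vertex ratio is the integer $1$, which automatically makes the ceiling-additivity hypothesis of Lemma 3.3 hold regardless of the parity of $n$.
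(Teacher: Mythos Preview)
Your proof is correct and follows essentially the same route as the paper: apply Lemma~3.3 with $G_1=Q_n$ and $G_2=C_{2k}$, checking that $|E(C_{2k})|/|V(C_{2k})|=1$ so that the ceiling-additivity condition $\lceil n/2 + 1\rceil = \lceil n/2\rceil + 1$ holds automatically. You supply slightly more detail (invoking Lemma~3.1 for $AT(C_{2k})$ and explicitly noting bipartiteness), but the argument is the same.
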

\begin{proof}
It is easy to know that $\frac{|E(C_{2k})|}{|V(C_{2k})|}=1,~AT(C_{2k})=\lceil\frac{|E(C_{2k})|}{|V(C_{2k})|}\rceil+1$, and $\lceil\frac{n}{2}+1\rceil=\lceil\frac{n}{2}\rceil+1$, by Lemma 3.3, $AT(Q_{n}\square C_{2k})=\lceil\frac{n}{2}\rceil+2$.
\end{proof}
\noindent\textbf{Remark.}
The $Q_{n}\square T_m$ and $Q_{n}\square C_{2k}$ are not $chromatic$-$AT~choosable$.

Next we prove Theorem 2 by the following lemmas.

Let $G_1$ and $G_2$ be simple undirected graphs, assume that   $V(G_1)=\{v_1,\cdots,v_m\}$. If $G$ is the corona $G_1\circ G_2$ of $G_1$ and $G_2$, by the Definition 2.2, note that $G$ contains one copy of $G_1$ and $m$ disjoint copies of $G_2$, let $G_2^i$ be the copy of $G_2$ connect the vertex $v_i$ in $G_1$ $(i\in[m])$.
\begin{lem}
If $G$ is the corona $G_1\circ G_2$ of $G_1$ and $G_2$, then
\[ \chi(G)=\begin{cases}
		\chi(G_1) ,  & \chi(G_2)\textless\chi(G_1);\\
		\chi(G_2)+1,  &  \chi(G_2)\geq\chi(G_1).
	\end{cases} \]
\end{lem}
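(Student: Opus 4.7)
The plan is to establish the equivalent formula $\chi(G)=\max\{\chi(G_1),\chi(G_2)+1\}$ and then split into the two cases stated in the lemma. Observe that $\chi(G_2)<\chi(G_1)$ is equivalent to $\chi(G_2)+1\le\chi(G_1)$, in which case the max equals $\chi(G_1)$, whereas $\chi(G_2)\ge\chi(G_1)$ makes the max equal to $\chi(G_2)+1$. So the whole lemma reduces to a single inequality in each direction.

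For the lower bound, I would note two subgraph inclusions. First, $G_1$ is an induced subgraph of $G$, giving $\chi(G)\ge\chi(G_1)$. Second, for any index $i\in[m]$, the set $\{v_i\}\cup V(G_2^i)$ induces in $G$ the join of $G_2$ with a single vertex (the universal vertex $v_i$), and the join with $K_1$ raises the chromatic number by exactly one; hence $\chi(G)\ge\chi(G_2)+1$. Taking the maximum gives $\chi(G)\ge\max\{\chi(G_1),\chi(G_2)+1\}$.

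For the upper bound, let $k=\max\{\chi(G_1),\chi(G_2)+1\}$ and use the palette $\{1,\dots,k\}$. First, properly color $G_1$ with $\chi(G_1)\le k$ colors; this assigns some color $c_i$ to each $v_i$. Then, for each $i$, extend the coloring to $G_2^i$ using only colors from $\{1,\dots,k\}\setminus\{c_i\}$, which has cardinality $k-1\ge\chi(G_2)$ by the definition of $k$. Since $G_2^i\cong G_2$, a proper coloring of $G_2^i$ from this palette exists. The copies $G_2^i$ are pairwise nonadjacent and the only new edges are from $v_i$ to $G_2^i$, whose endpoints receive colors in $\{c_i\}$ versus $\{1,\dots,k\}\setminus\{c_i\}$; so the combined coloring of $G$ is proper and uses $k$ colors.

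No step here is really a sticking point; the only thing to be careful about is matching the two stated cases with the single formula $\max\{\chi(G_1),\chi(G_2)+1\}$, and verifying cleanly that the "join with $K_1$" subgraph argument genuinely gives $\chi(G)\ge\chi(G_2)+1$ (which follows because any proper coloring of $G_2^i\cup\{v_i\}$ must use $\chi(G_2)$ colors on $G_2^i$ and a distinct color on the universal vertex $v_i$).
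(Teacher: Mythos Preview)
Your proof is correct and follows essentially the same approach as the paper: both establish the lower bound via the subgraph $G_1$ and the cone $\{v_i\}\cup V(G_2^i)$, and the upper bound by coloring $G_1$ first and then each $G_2^i$ avoiding the color of $v_i$. Your write-up is in fact more explicit than the paper's, which just asserts the upper bound as ``easy to know.''
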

\begin{proof}
The graph $G_1$ is the subgraph of $G$, hence $\chi(G)\geq\chi(G_1)$. Since $v_i$ is adjacent to each vertex of the $G_2^i$, we can know that $\chi(G)\geq\chi(G_2)+1$. It is easy to know that $\chi(G)\leq\chi(G_1)$ if $\chi(G_2)\leq\chi(G_1)-1$, and $\chi(G)\leq\chi(G_2)+1$ if $\chi(G_1)\leq\chi(G_2)$. So $\chi(G)=\chi(G_1)$ when $\chi(G_2)<\chi(G_1)$ (see Figure 3 for a proper coloring of $C_3\circ C_4$), and $\chi(G)=\chi(G_2)+1$ when $\chi(G_2)\geq\chi(G_1)$ (see Figure 3 for a proper coloring of $C_4\circ C_3$).
\begin{figure}[htbp]
	\centering
	\includegraphics[height=7cm, width=0.8\textwidth]{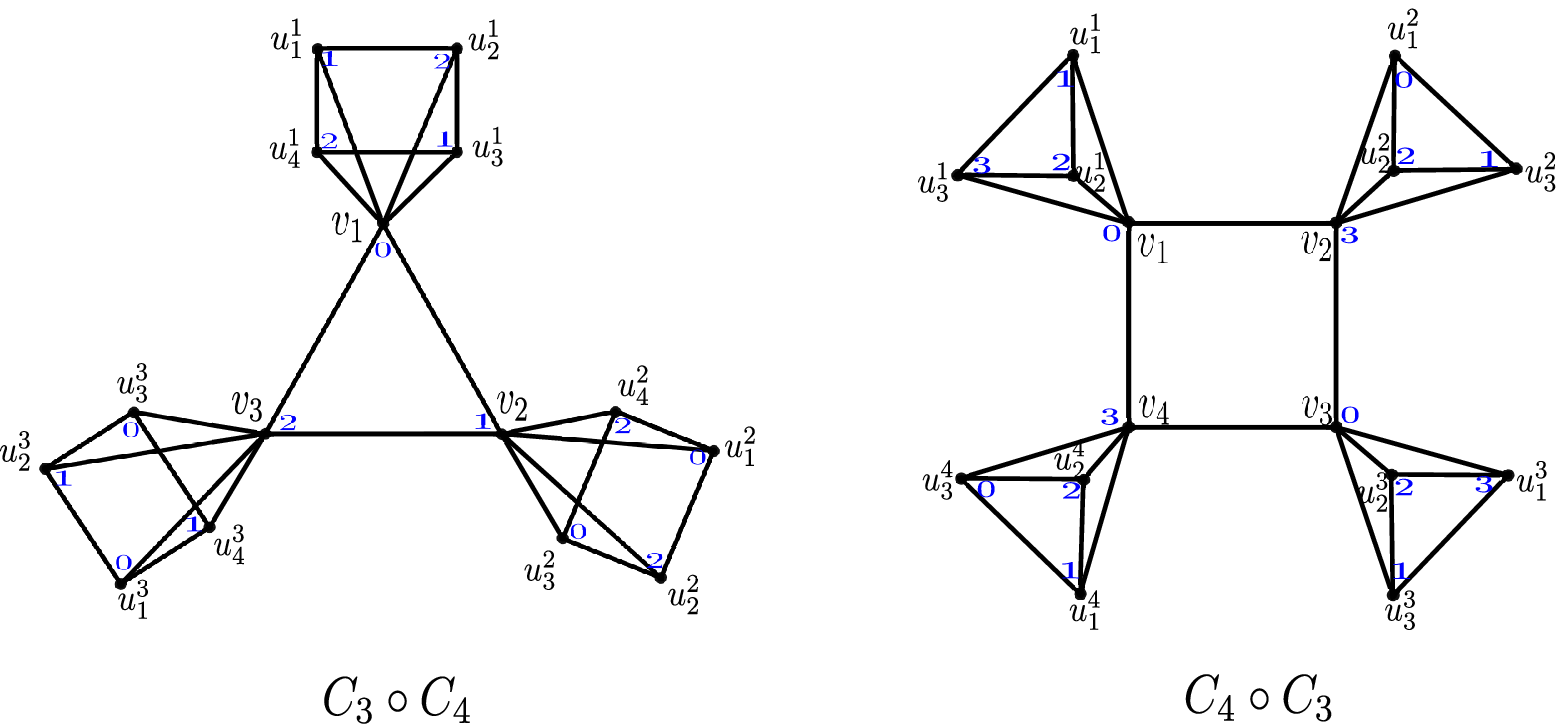}
	\centerline{Fig.3 A proper coloring of $C_3\circ C_4$ and  $C_4\circ C_3$. }
\end{figure}
\end{proof}

\begin{lem}
Let $G$ is the corona $G_1\circ G_2$ of $G_1$ and $G_2$, then
\[ AT(G)=\begin{cases}
	AT(G_1) ,  & AT(G_2)\textless AT(G_1);\\
	AT(G_2)~ or~ AT(G_2)+1,  &  AT(G_2)\geq AT(G_1).
\end{cases} \]
\end{lem}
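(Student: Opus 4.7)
The strategy divides into an upper-bound construction that applies Lemma 2.2, and a lower bound that uses monotonicity of the Alon-Tarsi number under subgraphs.

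For the upper bound, pick an AT-orientation $D_1$ of $G_1$ with maximum outdegree $AT(G_1)-1$ and, for each $i\in[m]$, an AT-orientation $D_2^i$ of the copy $G_2^i$ with maximum outdegree $AT(G_2)-1$. Orient every corona edge from its endpoint in $G_2^i$ toward $v_i$, producing an orientation $D$ of $G$. Set $X_1=\bigcup_{i=1}^m V(G_2^i)$ and $X_2=V(G_1)$; then every arc between $X_1$ and $X_2$ runs from $X_1$ to $X_2$. The induced subdigraph $D[X_2]=D_1$ is AT by choice, while $D[X_1]=D_2^1\sqcup\cdots\sqcup D_2^m$ is a disjoint union whose Eulerian subgraphs factor as independent componentwise choices, giving ${\rm diff}(D[X_1])=\prod_{i=1}^m{\rm diff}(D_2^i)\neq 0$. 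Lemma 2.2 then certifies that $D$ is an AT-orientation of $G$.

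A quick outdegree count: each $u\in V(G_2^i)$ has outdegree $d^+_{D_2^i}(u)+1\leq AT(G_2)$, while each $v_i\in V(G_1)$ retains $d^+_{D_1}(v_i)\leq AT(G_1)-1$; hence the maximum outdegree of $D$ is $\max\{AT(G_2),\,AT(G_1)-1\}$. When $AT(G_2)<AT(G_1)$ this maximum equals $AT(G_1)-1$, so $AT(G)\leq AT(G_1)$; when $AT(G_2)\geq AT(G_1)$ it equals $AT(G_2)$, so $AT(G)\leq AT(G_2)+1$. For the lower bound, I invoke the standard fact that $AT$ is monotone under subgraphs: $AT(H)\leq AT(G)$ whenever $H\subseteq G$. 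This is immediate from the Combinatorial Nullstellensatz formulation, since $P_G=P_H\cdot\prod_{e\in E(G)\setminus E(H)}(x_u-x_v)$ forces any nonvanishing monomial of $P_G$ with exponents $\leq AT(G)-1$ to contain a nonvanishing submonomial of $P_H$ with exponents bounded by the same quantity. Because $G_1$ and every $G_2^i\cong G_2$ sit inside $G$ as subgraphs, $AT(G)\geq\max\{AT(G_1),AT(G_2)\}$. Matching this with the upper bound yields $AT(G)=AT(G_1)$ when $AT(G_2)<AT(G_1)$, and $AT(G)\in\{AT(G_2),\,AT(G_2)+1\}$ when $AT(G_2)\geq AT(G_1)$.

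The most delicate point is the application of Lemma 2.2 when $D[X_1]$ is itself a disjoint union of $m$ AT-digraphs; one needs to record that ${\rm diff}$ is multiplicative under disjoint unions, which follows once one observes that each Eulerian subgraph of a disjoint union decomposes uniquely into Eulerian subgraphs of the components and that edge-parities add modulo $2$. After this, everything reduces to bookkeeping on outdegrees and to invoking subgraph monotonicity for the two trivial lower bounds.
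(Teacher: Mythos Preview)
Your argument is correct and follows essentially the same route as the paper's proof: the same orientation (each $G_2^i$ oriented by an AT-orientation of $G_2$, corona edges directed into $v_i$, $G_1$ oriented by an AT-orientation of $G_1$), the same appeal to Lemma~2.2 for the upper bound, and subgraph monotonicity of $AT$ for the lower bound. You are in fact more careful than the paper on two points it leaves implicit---the multiplicativity of ${\rm diff}$ over the disjoint union $D[X_1]=\bigsqcup_i D_2^i$, and the justification of $AT(H)\le AT(G)$ for $H\subseteq G$ via the graph polynomial---so nothing is missing.
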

\begin{proof}
Assume that $AT(G_1)=k_1,AT(G_2)=k_2$, by the definition of the Alon-Tarsi number, the graph $G_j$ has an $AT$-orientation $D_j$ $(j=1,2)$, such that every vertex $x\in V(D_j)$ with outdegree at most $k_j-1$. Denote $L_i$ as the set of edges connecting $G_2^i$ and $v_i~ (i\in[m])$. We give  an orientation $D$ for $G$ by the following rules:

$R_1:$ For the copy $G_1$, the edges belonging to $G_1$ are oriented as $D_1$.

$R_2:$ For the copies $G_2^i$, the edges belonging to $G_2^i$ are oriented as $D_2$.

$R_3:$ The edges in $L_i$ are oriented from $G_2^i$ to $v_i$.

By Lemma 2.2, the orientation $D$ of $G_1\circ G_2$ is an $AT$-orientation, and it has outdegree at most $\max\{k_1-1,k_2\}$, hence $AT(G)\leq \max\{AT(G_2)+1,AT(G_1)\}$. The graph $G_1$ and $G_2$ are the subgraphs of $G_1\circ G_2$, and $\chi(G)\leq AT(G)$, so $AT(G)\geq \max\{AT(G_1),AT(G_2),\chi(G)\}$.

If $AT(G_2)< AT(G_1)$, then $AT(G_1)\leq AT(G)\leq AT(G_1)$, $AT(G)=AT(G_1)$. If $AT(G_2)\geq AT(G_1)$, then $AT(G_2)\leq AT(G)\leq AT(G_2)+1$.
\end{proof}

\begin{coro}
Let $G$ is the corona $G_1\circ G_2$ of $G_1$ and $G_2$, and if $AT(G_2)\geq AT(G_1)$ and $G_2$ is $chromatic$-$AT~choosable$, then $G$ is $chromatic$-$AT~choosable$.
\end{coro}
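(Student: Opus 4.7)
The plan is to chain together the two lemmas that describe $\chi(G_1 \circ G_2)$ and $AT(G_1 \circ G_2)$ and then use the hypothesis that $G_2$ is chromatic-AT choosable to pin down which of the two possible values of $AT(G)$ actually occurs. Throughout, I will use the standard inequality $\chi(H) \leq AT(H)$ for every graph $H$.

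First, I would unpack the hypothesis. Since $G_2$ is chromatic-AT choosable, we have $\chi(G_2) = AT(G_2)$. Combining this with $AT(G_2) \geq AT(G_1) \geq \chi(G_1)$ yields $\chi(G_2) \geq \chi(G_1)$, so we are in the second branch of Lemma 3.4. This gives $\chi(G) = \chi(G_2) + 1 = AT(G_2) + 1$.

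Next, since $AT(G_2) \geq AT(G_1)$, Lemma 3.5 puts $AT(G)$ in the set $\{AT(G_2),\, AT(G_2)+1\}$. To decide which value occurs, I would use the inequality $AT(G) \geq \chi(G)$. By the previous paragraph, $\chi(G) = AT(G_2) + 1$, so $AT(G) \geq AT(G_2) + 1$. Together with the upper bound from Lemma 3.5 this forces $AT(G) = AT(G_2) + 1$. Hence $AT(G) = \chi(G)$, i.e., $G$ is chromatic-AT choosable.

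There is no real obstacle here: the work has all been done in Lemmas 3.4 and 3.5, and the corollary is a two-line consequence once one observes that the chromatic-AT choosability of $G_2$ raises the lower bound $AT(G) \geq \chi(G)$ to exactly match the upper bound $AT(G) \leq AT(G_2)+1$. The only point that deserves a sentence of explicit justification is the derivation $\chi(G_2) \geq \chi(G_1)$ from the assumed inequality $AT(G_2) \geq AT(G_1)$, which is needed to select the correct branch in Lemma 3.4.
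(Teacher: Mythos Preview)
Your argument is correct and mirrors the paper's proof essentially line for line: both deduce $\chi(G_2)\ge\chi(G_1)$ from $AT(G_2)\ge AT(G_1)\ge\chi(G_1)$, apply the corona lemma on $\chi$ to get $\chi(G)=AT(G_2)+1$, and then use $AT(G)\ge\chi(G)$ to select the value $AT(G_2)+1$ from the two possibilities given by the corona lemma on $AT$. The only cosmetic discrepancy is numbering: in the paper the two lemmas you cite are labeled 3.5 and 3.6 (not 3.4 and 3.5), since Corollary~3.4 occupies the intervening slot.
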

\begin{proof}
	If $G_2$ is $chromatic$-$AT~choosable$, then $\chi(G_2)=AT(G_2)$. Since $AT(G_2)\geq AT(G_1)$, $\chi(G_2)\geq AT(G_1)\geq\chi(G_1)$, by Lemma 3.5, $\chi(G)=\chi(G_2)+1=AT(G_2)+1$, $AT(G)=AT(G_2)+1=\chi(G)$, then $G$ is $chromatic$-$AT~choosable$.
\end{proof}

\noindent\textbf{Proof of Theroem 2.}
By Lemma 3.2, $AT(Q_n)\geq 3$ when $n\textgreater2$. We have $AT(G_2)< AT(Q_n)$, by Lemma 3.6, $AT(Q_{n}\circ G_2)=\lceil\frac{n}{2}\rceil+1$ when $n\textgreater2$.

When $n\leq2$. It is easy to see that $Q_{n}\circ G_2$ contains a triangle as its subgraph, we have $AT(Q_{n}\circ G_2)\geq3$. By Lemma 3.6, $AT(Q_{n}\circ G_2)\leq \max\{AT(Q_n),AT(G_2)+1\}=3$, hence $AT(Q_{n}\circ G_2)=3$. (See Figure 4 for an orientation of $Q_2\circ T_4$)
\begin{figure}[htbp]
\centering
\includegraphics[height=7cm, width=0.5\textwidth]{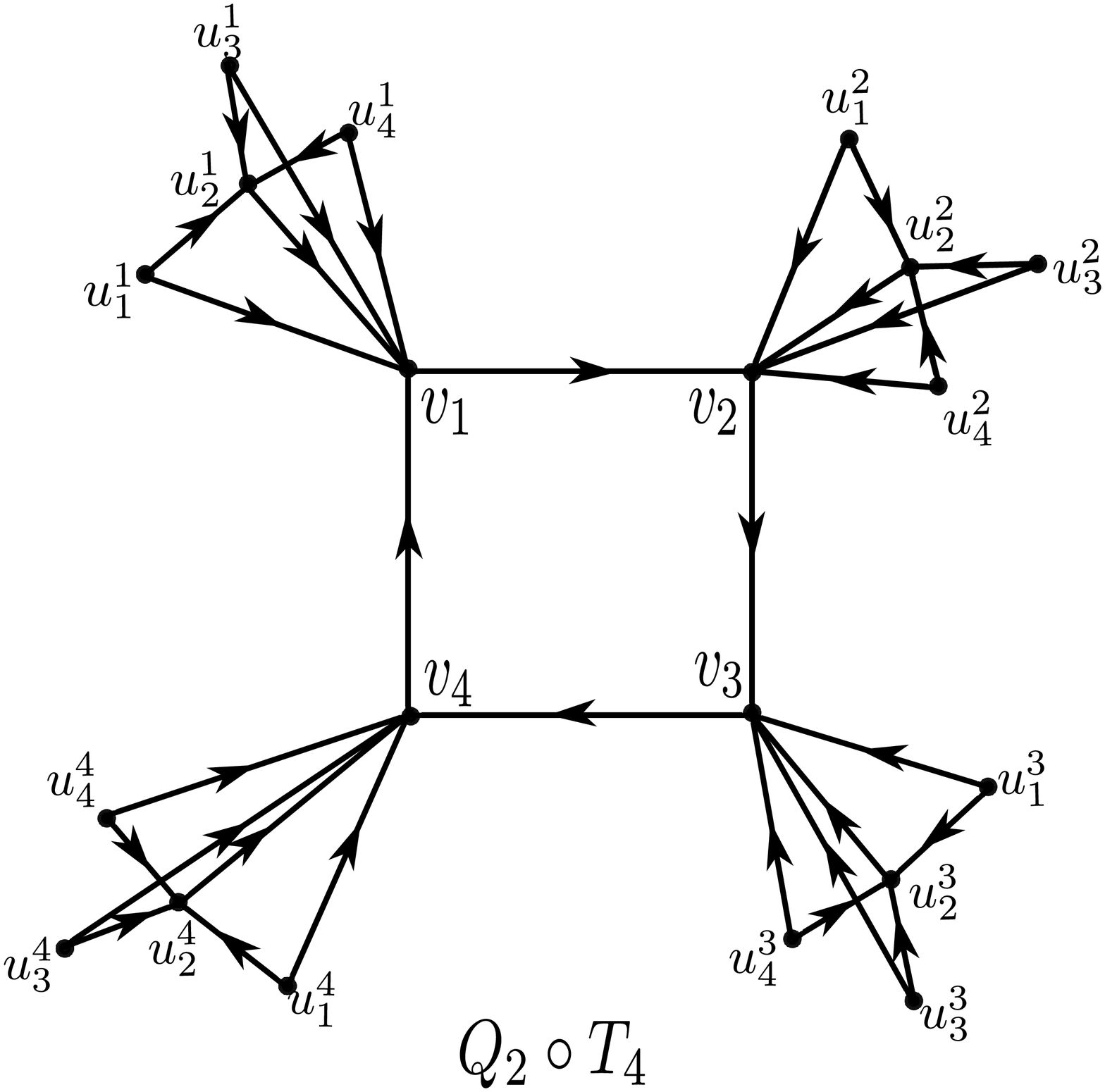}
\centerline{Fig.4 An orientation of $Q_2\circ T_4$. }
\end{figure}

\begin{coro}
For any $n,m,k\in \mathbb{N}$, and let $C_{2k}$ be an even cycle. Then
\[ AT(Q_{n}\circ C_{2k})=\begin{cases}
		3 ,  & n\leq2;\\
		\lceil\frac{n}{2}\rceil+1,  &  n\textgreater2.
	\end{cases} \]
\end{coro}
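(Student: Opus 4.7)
The plan is to derive this corollary as a direct specialization of Theorem 2 to the case $G_2 = C_{2k}$. The only auxiliary ingredient needed is the value $AT(C_{2k}) = 2$, which I would record first, after which the corollary falls out immediately from the theorem just proved.

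To obtain $AT(C_{2k}) = 2$, I would note that the even cycle $C_{2k}$ is $2$-regular (every vertex has degree $2$) and bipartite (one may two-color the vertices by parity around the cycle, using the fact that the cycle length is even). Lemma 3.1 then applies with $d = 2$, giving
\[
AT(C_{2k}) = \lceil 2/2 \rceil + 1 = 2.
\]
With this in hand, the hypotheses of Theorem 2 are satisfied by $G_2 = C_{2k}$: we have $AT(G_2) = 2$, and $|V(C_{2k})| = 2k \geq 2$. Invoking Theorem 2 therefore yields $AT(Q_n \circ C_{2k}) = 3$ when $n \leq 2$ and $AT(Q_n \circ C_{2k}) = \lceil n/2 \rceil + 1$ when $n > 2$, which is exactly the statement of the corollary.

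There is no substantive obstacle in this proof: the entire argument is a one-line instantiation of Theorem 2 once the Alon-Tarsi number of the even cycle has been identified via Lemma 3.1. If any care is required at all, it is only to keep the simple-graph convention in force (so that one takes $k \geq 2$, with $C_4$ as the smallest instance), but this affects neither the $2$-regularity nor the bipartiteness argument, and hence does not change the conclusion.
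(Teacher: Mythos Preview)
Your proposal is correct and matches the paper's intent: Corollary 3.8 is stated without proof in the paper, positioned immediately after Theorem 2 precisely because it is the specialization $G_2=C_{2k}$, and the fact $AT(C_{2k})=2$ is already used earlier (in the proof of Corollary 3.4) via Lemma 2.1 in essentially the same way you derive it from Lemma 3.1.
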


\begin{lem}
For any $n,k\in \mathbb{N}$, let $C_{2k+1}$ be an odd cycle. Then
 \[ AT(Q_{n}\circ C_{2k+1})=\begin{cases}
		4 ,  & n\leq4;\\
		\lceil\frac{n}{2}\rceil+1,  &  n\textgreater4.
	\end{cases} \]
\end{lem}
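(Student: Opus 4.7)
The plan is to apply Lemma 3.6 directly, with $G_1 = Q_n$ and $G_2 = C_{2k+1}$, after first pinning down $AT(C_{2k+1})=3$. This equality follows from the classical chain $\chi(C_{2k+1}) \leq ch(C_{2k+1}) \leq AT(C_{2k+1}) \leq \Delta(C_{2k+1})+1$, which sandwiches the value between $3$ (odd cycles are not $2$-choosable) and $3$ (max degree is $2$). Together with Lemma 3.2, this gives the two quantities needed to invoke Lemma 3.6.

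For the case $n>4$, I would note that $AT(Q_n)=\lceil n/2\rceil+1\geq 4>3=AT(C_{2k+1})$, so the first branch of Lemma 3.6 applies verbatim and produces
\[
AT(Q_n\circ C_{2k+1}) \;=\; AT(Q_n) \;=\; \left\lceil \tfrac{n}{2}\right\rceil + 1,
\]
which matches the second case of the claimed formula.

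For the case $n\leq 4$, $AT(Q_n)\in\{2,3\}\leq 3=AT(C_{2k+1})$, so Lemma 3.6 only narrows the answer to $\{3,4\}$, and the upper bound $AT(Q_n\circ C_{2k+1})\leq 4$ is free. The work is the matching lower bound. The plan here is to exhibit a subgraph with chromatic number $4$: for any vertex $v_i\in V(Q_n)$, the subgraph induced on $\{v_i\}\cup V(C_{2k+1}^i)$ is the odd wheel $K_1 + C_{2k+1}$, whose chromatic number is $\chi(C_{2k+1})+1=4$ because the apex is adjacent to every vertex of an odd cycle and therefore must take a fourth color. Since chromatic number is monotone under subgraphs and $\chi\leq AT$, I conclude $AT(Q_n\circ C_{2k+1})\geq 4$, and hence equality.

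The only mildly delicate point is establishing $AT(C_{2k+1})=3$, since the bipartite formula of Lemma 2.1 does not apply to odd cycles; I handle this by the $\Delta+1$ upper bound combined with non-$2$-choosability. Everything else is bookkeeping around Lemma 3.6 and the wheel-subgraph observation, so no new explicit AT-orientation has to be built.
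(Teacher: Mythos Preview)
Your proof is correct and follows essentially the same route as the paper: both use Lemma~3.6 with $AT(Q_n)=\lceil n/2\rceil+1$ and $AT(C_{2k+1})=3$ to split into the two cases, and both obtain the lower bound $4$ for $n\le 4$ via $\chi\ge 4$. The only cosmetic differences are that the paper invokes Lemma~3.5 to get $\chi(Q_n\circ C_{2k+1})=4$ whereas you extract the odd wheel directly, and you are more explicit in justifying $AT(C_{2k+1})=3$, which the paper uses without comment.
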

\begin{proof}
Since $\chi(Q_n)=2<\chi(C_{2k+1})=3$, by Lemma 3.5, we have $\chi(Q_{n}\circ C_{2k+1})=\chi(C_{2k+1})+1=4$. By Lemma 3.2, $AT(Q_n)\geq 4$ when $n\textgreater4$, $AT(C_{2k+1})< AT(Q_n)$, by Lemma 3.6, we have $AT(Q_{n}\circ C_{2k+1})=\lceil\frac{n}{2}\rceil+1$ when $n\textgreater4$.

When $n\leq4$. By Lemma 3.6, $AT(Q_{n}\circ C_{2k+1})\leq \max\{AT(Q_n),AT(C_{2k+1})+1\}=4$, and $AT(Q_{n}\circ C_{2k+1})\geq\chi(Q_{n}\circ C_{2k+1})=4$, hence $AT(Q_{n}\circ C_{2k+1})=4$.
\end{proof}

\noindent\textbf{Remark.}
The $Q_{n}\circ G_2$ and $Q_{n}\circ C_m$ are not $chromatic$-$AT~choosable$.

\section{Conclusion}
In this article, we obtain the exact value of the Alon-Tarsi number of $n$-cube $Q_n$; $Q_{n}\square T_m$ and $Q_{n}\square C_{2k}$; and $Q_{n}\circ G_2$, where $AT(G_2)=2$, and $Q_{n}\circ C_m$. Additionally, note that $n$-cube $Q_n$,  $Q_{n}\square T_m$, $Q_{n}\square C_{2k}$;  and $Q_{n}\circ G_2$, $Q_{n}\circ C_m$ are not $chromatic$-$AT~choosable$. Corollary 3.7 shows that when $G_1$ and $G_2$ satisfy certain conditions, if $G_2$ is $chromatic$-$AT~choosable$, then $G_1\circ G_2$ is $chromatic$-$AT~choosable$. This leads to the following question.
\\

\noindent\textbf{Question.} 
When $G_1$ and $G_2$ satisfy certain conditions, whether $G_1$ or $G_2$ is $chromatic$-$AT~choosable$, then $G_1\square G_2$ is $chromatic$-$AT~choosable$?
\section{Acknowledgement}
This work was partially funded by Science and Technology Project of Hebei Education Department, China (No. ZD2020130) and the Natural Science Foundation of Hebei Province, China (No. A2021202013)

\end{document}